\newtheorem{lemma}{Lemma}
\newtheorem{theorem}{Theorem}
\newtheorem{assumption}{Assumption}
\begin{document}
%
\title{First- and Zeroth-Order Learning \\ in  Asynchronous Games}
%
%
%

\author{Zifan Wang, Xinlei Yi, Michael M. Zavlanos, and Karl H. Johansson
\thanks{* This work was supported in part by Swedish Research Council Distinguished Professor Grant 2017-01078, Knut and Alice Wallenberg Foundation, Wallenberg Scholar Grant, Swedish Strategic Research Foundation SUCCESS Grant FUS21-0026, and AFOSR under award \#FA9550-19-1-0169. (Corresponding author: ***.)}
\thanks{Zifan Wang and Karl H. Johansson are with the Division of Decision and Control Systems, School of Electrical Enginnering and Computer Science, KTH Royal Institute of Technology, and also with Digital Futures, SE-10044 Stockholm, Sweden. Email: \{zifanw,kallej\}@kth.se.}
\thanks{Xinlei Yi is with the Department of Control Science and Engineering, College of Electronics and Information Engineering, Tongji University. Email: xinleiyi@tongji.edu.cn.}
\thanks{Michael M. Zavlanos is with the Department of Mechanical Engineering and Materials Science, Duke University, Durham, NC, USA. Email: michael.zavlanos@duke.edu.}
}

%
%

\markboth{Journal of \LaTeX\ Class Files,~Vol.~14, No.~8, August~2015}%
{Shell \MakeLowercase{\textit{et al.}}: Asynchronous Learning in Online Convex Games}
%


\newcommand{\EE}{\mathbb{E}}
\newcommand{\SSS}{\mathbb{S}}
\newcommand{\BB}{\mathbb{B}}

\allowdisplaybreaks

\maketitle

\begin{abstract}
This paper investigates the discrete-time asynchronous games in which noncooperative agents seek to minimize their individual cost functions. Building on the assumption of partial asynchronism, i.e., each agent updates at least once within a fixed-length time interval, we explore the conditions to ensure convergence of such asynchronous games. The analysis begins with a simple quadratic game from which we derive tight convergence conditions through the lens of linear control theory. Then, we provide a quasidominance condition for general convex games. Our results demonstrate that this condition is stringent since when this condition is not satisfied, the asynchronous games may fail to converge. We propose both first- and zeroth-order learning algorithms for asynchronous games, depending on the type of available feedback, and analyze their last-iterate convergence rates. Numerical experiments are presented on economic market problems to verify our results.







\end{abstract}

\begin{IEEEkeywords}
Asynchronous games, first-order algorithm, last-iterate convergence, partial asynchronism, zeroth-order algorithm.
\end{IEEEkeywords}

%
\IEEEpeerreviewmaketitle

\section{Introduction}

Convex games find applications in many domains ranging from online marketing \cite{lin2021doubly} to transportation networks \cite{sessa2019no}.
In these games, agents aim to minimize their loss functions through interactions with other agents.
This interaction process involves each agent taking actions simultaneously and receiving feedback based on the collective actions of all agents. 
Using this feedback, each agent optimizes their actions.
The overall dynamics of the system are usually analyzed using the notion of the Nash equilibrium, which represents a stable point where no agent has an incentive to deviate.
There are numerous works that explore learning in different kinds of games \cite{bravo2018bandit,lin2020finite,drusvyatskiy2022improved,tatarenko2018learning,duvocelle2023multiagent,mazumdar2020gradient}.
For example, \cite{mazumdar2020gradient} analyzes the limiting behavior of continuous-time gradient-based dynamics for several classes of games using dynamical systems theory. 
Moreover, \cite{lin2020finite} proposes a discrete-time first-order gradient descent algorithm for $\lambda$-coercive games, demonstrating that it achieves the last-iterate convergence to a Nash equilibrium.
The works \cite{bravo2018bandit,drusvyatskiy2022improved} investigate scenarios where agents only access zeroth-order oracles, i.e., function evaluations, and establish convergence for strongly monotone games.

In this paper, we consider the more practical case of asynchronous games, in which agents update their actions at their own pace. 
Specifically, we focus on the discrete-time asynchronous games, where each agent adjusts its actions according to individually determined schedules.
This scenario is common in practical applications.
For example, in Cournot games, multiple companies set production levels to minimize their own costs, but they probably do so on different timelines. Some companies may adjust their actions monthly, while others make adjustments daily, seasonally, or even randomly. 
Despite its practical relevance, the analysis of such asynchronous games is lacking in existing literature.
Motivated by the above discussions, our research aims to systematically investigate games that ensure convergence in asynchronous settings. Moreover, we seek to develop algorithms and analyze their convergence rates to better understand the underlying processes.

Our contributions are detailed as follows. We provide a detailed discussion on the class of games that ensure convergence of asynchronous games. 
Focusing on the partial asynchronism mechanism, which ensures that each agent updates at least once within a fixed-length time interval, we establish the condition for the convergence of dynamics of asynchronous games. 
Specifically, we begin with a simple quadratic game from which we analyze the whole system through the existing linear control theory. 
Inspired by the simple case, we introduce a quasidominance condition for general convex games, which we show is stringent since the failure of satisfaction may lead to divergence.
It is worth mentioning that the strong monotonicity condition, which can be implied by the quasidominance condition, is not sufficient to ensure the convergence of asynchronous games.
Then, we propose first- and zeroth-order asynchronous learning algorithms tailored for different feedback available to agents. The first-order algorithm is designed for scenarios where agents can observe others' actions and compute the first-order gradient, while the zeroth-order one is suited for situations where agents only receive cost evaluation feedback. We provide theoretical convergence guarantees for both algorithms.
Finally, we validate our results by conducting numerical experiments on a Cournot game.

Related to our work is the literature on asynchronous distributed optimization \cite{assran2020advances,zhang2019asyspa,feyzmahdavian2023asynchronous,wu2023asynchronous,wu2023delay}. 
In these works, multiple agents cooperatively minimize a cost function through communication, and asynchronous algorithms are designed to accelerate the computation in large-scale systems.
However, distributed optimization is framed as a multi-agent cooperative problem, which contrasts sharply with the competitive nature of games.
Moreover, in multi-agent games, each agent’s cost function depends on the actions of others, making the dynamics of asynchronous updates more involved.
Consequently, the methodologies developed in the field of asynchronous distributed optimization are not applicable to our context.
Perhaps most relevant to our research are the studies on asynchronous learning in networked games \cite{cenedese2020asynchronous,cenedese2021asynchronous,yi2019asynchronous,zhu2022asynchronous}. 
For example, \cite{cenedese2021asynchronous} considers asynchronous proximal dynamics in convex games and establishes convergence when the degree of asynchrony is limited. 
However, asynchronous algorithms in \cite{cenedese2020asynchronous} and \cite{cenedese2021asynchronous} allow only one agent to update at each iteration, whereas our algorithms allow multiple agents to update simultaneously. 
Moreover, the techniques used for analysis are quite different. While these prior works analyze the asymptotic convergence, we provide a new non-asymptotic convergence analysis.
Besides, these works assume access to first-order gradients for each agent, which may not be feasible when agents are unwilling to share their actions.
To solve this issue, we additionally consider a setting where agents rely solely on function evaluations and employ zeroth-order gradient estimates for action updates.

Another related research line is learning in synchronous games \cite{mertikopoulos2019learning,belgioioso2020distributed,bravo2018bandit,drusvyatskiy2022improved,tatarenko2018learning,lin2020finite}. 
For example, \cite{lin2020finite} proposes a first-order gradient descent algorithm for $\lambda$-coercive games and demonstrates that it achieves last-iterate convergence to a Nash equilibrium.
In the case where agents only access function evaluations, \cite{bravo2018bandit} proposes a zeroth-order optimization method and shows its convergence to the Nash equilibrium for strongly monotone games.
However, common in these works is that the agents perform synchronous updates, and their methods cannot be directly extended to the asynchronous setting considered here.
It is worth mentioning that asynchronous games are related to games with delayed rewards \cite{mertikopoulos2020gradient} since asynchronous updates can result in outdated information for each agent. 
However, the delayed setting in \cite{mertikopoulos2020gradient} is fundamentally different from asynchronous setting: asynchronous setting emphasizes the independence and lack of coordination between agents, whereas the delayed setting focuses on the lag in the feedback received by each agent.

The rest of the paper is organized as follows. In Section~\ref{sec:1_problem}, we formally define the asynchronous games. In Section~\ref{sec:2_condition}, we discuss the condition required for convergence of asynchronous games.
We provide first- and zeroth-order asynchronous learning algorithms in Sections~\ref{sec:3_FO} and \ref{sec:4_ZO}, respectively.
In Section~\ref{sec:5_experiments}, we numerically verify our methods using a Cournot game example. Finally, we conclude the paper in Section~\ref{sec:6_conclusion}.





\section{Problem Definition}
\label{sec:1_problem}

Consider a repeated game involving $N$ non-cooperative agents, whose goals are to minimize their own cost functions through interactions with others. For each agent $i$, the cost function is defined as $C_i(x_i,x_{-i})$, where $x_i$ is the action of agent $i$, and $x_{-i}$ denotes the actions of all agents except agent $i$. Suppose that agent $i$'s action is constrained in a closed convex set $\mathcal{X}_i \in \mathbb{R}^d$ and $\mathcal{X}=\Pi_{i=1}^N\mathcal{X}_i$ is the joint action space. We assume that $\mathcal{X}_i$ contains the ball with radius $R$ centered at the origin and has a bounded diameter $D>0$, for all $i=1,\ldots,N$.

Each agent $i$ aims to find the best actions that minimize the cost $C_i$ in an iterative interaction process by virtue of the received feedback. This feedback may consist of either first-order gradient information or zeroth-order function values, depending on the mechanisms of the game.
This paper explores the dynamics of such interactions over a discrete-time horizon $t=1,\ldots,T$. 
Importantly, we consider a practical scenario in which agents do not update their actions synchronously but instead follow individually determined schedules.
To model this, we define $\mathbb{T}^i$ as the set of time steps at which agent $i$ plans to update their actions at the subsequent time step.
For example, if the current time is $\tau$ and $\tau \in \mathbb{T}^i$, then agent $i$ will update their actions at the next time step $\tau+1$.

If the agent updates their actions in a completely asynchronous manner, the system's dynamics are expected to deteriorate. Therefore, we make the following assumption, which is denoted as partial asynchronism in \cite{bertsekas2015parallel}.
\begin{assumption}\label{assump:delay}
There exists $B>0$ such that each agent updates at least once in the time interval $[t,t+B)$, for all $t$.
\end{assumption}
Assumption~\ref{assump:delay} ensures that within any time interval of length $B$, each agent must perform at least one update. Here, we do not have any other restriction on the mechanism of asynchronism.

In the context of the asynchronous games described above, our goal is to investigate the class of games where gradient-based asynchronous algorithms still converge to Nash equilibria.
Besides, we aim to develop specific algorithms and analyze their convergence rates for the asynchronous setting.

\section{Stringent Condition for Convergence of Asynchronous Games}\label{sec:2_condition}
In this section, we explore the class of games in which the dynamics of asynchronous games are guaranteed to converge. We begin with a simple quadratic game such that its gradient dynamics can be analyzed through existing linear control theory. 
Specifically, in this simple case, we aim to explore (Q1) the condition required for convergence of gradient descent dynamics in synchronous games; (Q2) whether convergence in synchronous games guarantees convergence in asynchronous games; and (Q3) if not, the condition required for convergence in asynchronous games.
Then, we extend it to more general games.

\subsection{Simple Case Illustration}
Consider a game with each agent $i$ having a quadratic cost $C_i(x)$ and gradient $\nabla_i C_i(x) = \sum_{j}\nabla_{ij} C_i(x) x_j$, where $\nabla_{ij} C_i(x)$ denotes the partial derivative of $C_i$ with respect to $x_i$ and $x_j$, and is assumed to be constant. For ease of illustration, we assume $x_i \in \mathbb{R}$ in this simple case. 

In synchronous games, each agent $i$ updates their actions according to the gradient descent dynamics $x_{i,t+1} = x_{i,t} - \eta \nabla_i C_i(x_t)$, where $x_t=(x_{1,t}^\top,\dots,x_{N,t}^\top)^\top$. The dynamics of the entire system can be expressed as 
\begin{align}\label{eq:dynamics:syn}
    x_{t+1} = x_{t} - \eta J x_t,
\end{align}
where the Jacobian matrix $J$ is constant and defined as
\begin{align*}
    J := \begin{bmatrix}
        \nabla_{11} C_1(x) & \cdots & \nabla_{1N} C_1(x) \\
        \vdots & \ddots & \vdots \\
        \nabla_{N1} C_N(x) & \cdots & \nabla_{NN} C_n(x)
    \end{bmatrix}
    = \begin{bmatrix} J^1 \\ \vdots \\ J^N\end{bmatrix}.
\end{align*}
Here, $J^i$ denotes the $i$-th row of the matrix $J$.
It is easy to verify that the Nash equilibrium is the original point, which coincides with the stable point of the linear dynamics \eqref{eq:dynamics:syn}.
Therefore, the convergence of synchronous games is equivalent to the stabilization of the linear dynamics.
It is well known from linear control theory that the dynamics \eqref{eq:dynamics:syn} converges if and only if $-J$ is Hurwitz, i.e., every eigenvalue of $-J$ has a strictly negative real part. This answers the question (Q1). 
Besides, from \cite{FB-CTDS}, $-J$ is Hurwitz if and only if there exists $P>0$ such that $ -J^{\top}P - PJ<0$. 
It can be verified that this condition is equivalent to the diagonally strictly concave condition in \cite{rosen1965existence}, which is termed as monotonicity condition in some later works \cite{bravo2018bandit}.
This observation aligns with the common sense that the gradient descent dynamics of monotone games converge to the equilibrium.

In asynchronous games, the gradient descent dynamics can be expressed as 
\begin{align}\label{eq:dynamics:asyn}
    y_{t+1} = y_t - \eta J_t y_t,
\end{align}
where $J_t^i$ equals $J^i$ if $t\in \mathbb{T}^i$ and $\textbf{0}_{1\times N}$ otherwise. 
Iteratively using \eqref{eq:dynamics:asyn} yields
\begin{align}\label{eq:dynamics:asyn2}
    y_{t+B} &= \Pi_{k=t}^{t+B-1}(I - \eta J_k)y_t \nonumber \\
    &=y_t - \big(\eta (\sum_{k=t}^{t+B-1}J_k) + \mathcal{O}(\eta)\big) y_t \nonumber \\
    & = y_t - \big(\eta H_t + \mathcal{O}(\eta)\big) y_t,
\end{align}
where $H_t =\sum_{k=t}^{t+B-1}J_k= A_t J $, and $\mathcal{O}(\eta)$ denotes the higher-order terms of $\eta$ that can be ignored when $\eta$ is small enough. The matrix $A_t$ is a diagonal matrix whose $i$-th element, $a_{i,t}$, represents the number of updates performed by agent $i$ during the time interval $[t,t+B)$. Given Assumption~\ref{assump:delay}, we have $a_{i,t}\geq 1 $ and thus $A_t$ is always diagonally positive definite.
From linear control theory again, the dynamics \eqref{eq:dynamics:asyn2} converges if $-A_t J$ is Hurwitz.
Obviously, the Hurwitz property of $-J$ cannot guarantee the Hurwitz property of $-A_t J$ for any $A_t$. 
For example, when 
\begin{align*}
    J = \begin{bmatrix}
    0.1 & -2 & 1 \\
    -2 & 0.2 & 4 \\
    -3 & -4 & 1.7
    \end{bmatrix}, \;
    A_t = \begin{bmatrix}
    2 & 0 & 0 \\
    0 & 1 & 0 \\
    0 & 0 & 1
    \end{bmatrix},
\end{align*}
it can be verified that $-J$ is Hurwitz but $-A_t J $ is not.
Therefore, the answer to the question (Q2) is negative.

Based on the above analysis, the question (Q3) in the simple quadratic game can be detailed as follows: What is the condition required for the matrix $J$ to ensure that $-A_t J$ is Hurwitz for the diagonal matrix $A_t$ with diagonal elements being positive integers?
The condition is that $J$ is quasidominant \cite{FB-CTDS}, i.e., there exists a vector $r\in \mathbb{R}^N$ with $r_i>0$ such that $r_i J_{ii} > \sum_{j \neq i} r_j |J_{ij}|$ for all $i=1,\ldots,N$.
To see this, we first have that $A_t J$ is quasi-dominant since $r_i a_{i,t} J_{ii} > \sum_{j \neq i} r_j a_{i,t} |J_{ij}|$ for any positive $a_{i,t}$.
Then, from Remark 2.6 in \cite{FB-CTDS}, it holds that $-A_t J$ is Hurwitz. Therefore, the above quasidominant condition guarantees the convergence of asynchronous dynamics in the simple quadratic game.

\subsection{General Function Case}
Motivated by the quasidominant condition observed in the above simple scenario, this section presents the condition required for the convergence of asynchronous games in general cases.
The specific condition is detailed in the following assumption.
\begin{assumption}\label{assump:cost} (Quasidominance)
The cost function $C_i(x_i,x_{-i})$ is $\mu_i$-strongly convex in $x_i$ for every $x_{-i}$ and $\nabla_i C_i(x)$ is $L_{ij}$-Lipschitz in $x_j$, for $i=1,\ldots,N$. Besides, there exists a positive vector $r=(r_1,\ldots,r_N)$ such that $r_i \mu_i > \sum_{j \neq i} r_j L_{ij}$, for $i=1,\ldots,N$.
\end{assumption}

Assumption~\ref{assump:cost} states that the influence of other agents on each agent $i$, represented by the parameters $L_{ij}$, is sufficiently small.
With Assumption~\ref{assump:cost}, we can guarantee the existence and uniqueness of the Nash equilibrium. This is established by showing that Assumption~\ref{assump:cost} ensures the game is strictly monotone, as presented in the following lemma.

\begin{lemma}
Given Assumption~\ref{assump:cost}, the game with the cost functions $C_i$ is strictly monotone, i.e., there exist positive constants $\lambda_i$ such that
\begin{align*}
    \sum_i \lambda_i \langle \nabla_i C_i(x) - \nabla_i C_i(y) ,x_i - y_i \rangle > 0,
\end{align*}
for all $x,y \in \mathcal{X}$, $x \neq y$.
\end{lemma}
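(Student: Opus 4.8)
The plan is to reduce the claimed inequality to the positive definiteness of a comparison matrix built from the constants $\mu_i$ and $L_{ij}$, and then to use the quasidominance hypothesis to produce the weights $\lambda_i$. First I would derive a pointwise lower bound on each summand. Writing $z_i := \|x_i - y_i\|$ and inserting the intermediate point $(y_i,x_{-i})$, I split
\begin{align*}
    \nabla_i C_i(x) - \nabla_i C_i(y) = \big(\nabla_i C_i(x_i,x_{-i}) - \nabla_i C_i(y_i,x_{-i})\big) + \big(\nabla_i C_i(y_i,x_{-i}) - \nabla_i C_i(y_i,y_{-i})\big).
\end{align*}
Taking the inner product with $x_i - y_i$, the first group contributes at least $\mu_i z_i^2$ by $\mu_i$-strong convexity of $C_i(\cdot,x_{-i})$, while the second is bounded below by $-\big(\sum_{j\ne i} L_{ij} z_j\big) z_i$ using the $L_{ij}$-Lipschitz property (changing the coordinates $j\neq i$ one at a time, then Cauchy--Schwarz). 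This yields $\langle \nabla_i C_i(x)-\nabla_i C_i(y),\, x_i-y_i\rangle \ge \mu_i z_i^2 - z_i\sum_{j\ne i}L_{ij}z_j$.

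Next I would assemble these bounds. Multiplying by $\lambda_i>0$ and summing gives $\sum_i \lambda_i\langle \nabla_i C_i(x)-\nabla_i C_i(y),\, x_i-y_i\rangle \ge z^\top \mathrm{diag}(\lambda)\,\Phi\, z$, where $z=(z_1,\dots,z_N)^\top\ge 0$ and $\Phi$ is the comparison matrix with $\Phi_{ii}=\mu_i$ and $\Phi_{ij}=-L_{ij}$ for $j\neq i$. It therefore suffices to choose $\lambda$ so that the symmetric part $\mathrm{diag}(\lambda)\Phi + \Phi^\top\mathrm{diag}(\lambda)$ is positive definite, since then the right-hand side is at least $c\|z\|^2>0$ whenever $z\neq 0$, i.e.\ whenever $x\neq y$.

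The key observation is that the quasidominance hypothesis $r_i\mu_i > \sum_{j\ne i}r_j L_{ij}$ is exactly the statement $\Phi r>0$ for a positive vector $r$; since $\Phi$ is a Z-matrix (nonpositive off-diagonals), this makes $\Phi$ a nonsingular M-matrix. I would then invoke the diagonal Lyapunov stability of such matrices, which is the same circle of ideas used for the quadratic case via \cite{FB-CTDS}, to obtain a positive diagonal $\mathrm{diag}(\lambda)$ with $\mathrm{diag}(\lambda)\Phi+\Phi^\top\mathrm{diag}(\lambda)\succ 0$, and take its entries as the $\lambda_i$.

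I expect the main obstacle to be exactly this last step of producing the $\lambda_i$. A naive symmetric Young's inequality with the choice $\lambda_i = r_i$ does not close, because symmetrizing forces the transposed column sums $\sum_{j\ne i} r_j L_{ji}$ to appear, and quasidominance controls only the row sums $\sum_{j\ne i} r_j L_{ij}$; since $L$ is not symmetric there is no free lunch. The genuine content is that the diagonal scaling achieving positive definiteness generally differs from $r$ and is guaranteed precisely by the M-matrix structure. I would therefore either cite the diagonal-stability result or construct $\lambda$ explicitly from $r$ and $\Phi^{-1}$, verify the positive-definiteness bound, and conclude strict monotonicity.
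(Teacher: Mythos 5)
Your proposal is correct and follows essentially the same route as the paper: the same splitting through the intermediate point $(y_i,x_{-i})$, the same reduction to the comparison matrix (the paper's $Q$, your $\Phi$), and the same appeal to diagonal Lyapunov stability of the resulting M-matrix (the paper invokes the Hurwitz Metzler Theorem) to produce the weights $\lambda_i$. Your remark that $\lambda=r$ does not work directly and that the diagonal scaling must come from the M-matrix structure is a correct reading of exactly the step the paper handles by citation.
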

\begin{proof}
Define the matrix 
\begin{align*}
    Q = \begin{bmatrix}
        \mu_1 & \cdots & -L_{1N} \\
        \vdots & \ddots & \vdots  \\
        -L_{N1} & \cdots & \mu_N
    \end{bmatrix}.
\end{align*}
Given Assumption~\ref{assump:cost}, we have that the matrix $Q$ is quasidominant, which implies that $-Q$ is $M$-Hurwitz \cite{FB-CTDS}. By the Hurwitz Metzler Theorem \cite{FB-LNS}, there exists a diagonal matrix $\Lambda>0$ such that $-\Lambda Q - Q^{\top} \Lambda <0$.
Setting $\lambda_i$ as the $i$-th element of the matrix $\Lambda$, we have
\begin{align*}
    &\sum_i \lambda_i \langle \nabla_i C_i(x) - \nabla_i C_i(y) ,x_i - y_i \rangle \nonumber \\
    &=\sum_i \lambda_i \langle \nabla_i C_i(x) - \nabla_i C_i(y_i,x_{-i}) ,x_i - y_i \rangle \nonumber \\
    &\quad + \sum_i \lambda_i \langle \nabla_i C_i(y_i,x_{-i}) - \nabla_i C_i(y) ,x_i - y_i \rangle \nonumber \\
    &\geq \sum_i \lambda_i \mu_i \left\| x_i - y_i\right\|^2 - \sum_i \lambda_i \sum_{j \neq i} L_{ij}\left\| x_j - y_j \right\| \left\| x_i - y_i\right\| \nonumber \\
    & = z^\top \Lambda Q z = z^\top \frac{\Lambda Q + Q^{\top} \Lambda}{2} z,
\end{align*}
where we define $z = [\left\|x_1 - y_1 \right\|,\cdots, \left\|x_N - y_N \right\|]^{\top}$. The inequality follows from the strong convexity of $C_i$ and Lipschitz continuous property of $\nabla_i C_i$. Since $\Lambda Q + Q^{\top} \Lambda >0$, we obtain the desired result. 
\end{proof}

As shown by \cite{rosen1965existence}, monotone games admit a unique Nash equilibrium. Therefore, games that fulfill Assumption~\ref{assump:cost} also exhibit a unique Nash equilibrium. Throughout this paper, we denote by $x^*$ the unique Nash equilibrium.

The condition $r_i \mu_i > \sum_{j \neq i} r_j L_{ij}$ generalizes the one observed in the simple case. 
Although this condition may appear conservative at first glance, it is indeed stringent since failure of satisfaction may not lead to convergence in even synchronous dynamics. Consider a simple example where we have $J=[1,-1;-1,1]$. In this case, the quasidominance condition fails to hold. It can be easily verified that the dynamics $x_{t+1}=x_t - A J x_t$ fail to converge for any diagonal matrix $A$ with diagonal elements being positive integers.

\section{First-order Asynchronous Games}\label{sec:3_FO}
In this section, we propose a first-order gradient descent algorithm for asynchronous games and analyze its convergence.

We assume that during the learning process, each agent can observe the actions of other agents. When they decide to update their actions, they utilize the first-order gradient.
The detailed algorithm is presented in Algorithm~\ref{alg:FO}.
Specifically, at each time step $t$, if $t\in \mathbb{T}^i$,
agent $i$ collects first-order gradient information and updates its action for the next time step. Otherwise, if $t \not\in \mathbb{T}^i$, agent $i$ retains the same action.
The update strategy of the agent $i$ can be expressed as
\begin{align}\label{eq:update:FO}
    x_{i,t+1} = \left\{ \begin{array}{cc} \mathcal{P}_{\mathcal{X}_i}(x_{i,t}-\eta \nabla_i C_i(x_t) ), & t\in T^i, \\ x_{i,t}, & {\rm{otherwise}} ,\end{array} \right.
\end{align}
where $\mathcal{P}_{\mathcal{X}_i}$ denotes the projection onto the set $\mathcal{X}_i$.
\begin{algorithm}[t]
\caption{First-order asynchronous games} \label{alg:FO}
\begin{algorithmic}[1]
    \STATE \textbf{Input}:Step size $\eta$, initial value $x_{i,1}$, $i=1,\ldots,N$, and length of interval $T$.
    \FOR{$ t=1,\ldots,T$}
        \FOR{agent $i$}
        \STATE Play the action $x_{i,t}$
        \ENDFOR
        \FOR{agent $i$}
            \IF{ $t\in T^i$}
                \STATE $x_{i,t+1} = \mathcal{P}_{\mathcal{X}_i}(x_{i,t}-\eta \nabla_i C_i(x_t) )$
            \ELSE
                \STATE $x_{i,t+1}=x_{i,t}$
            \ENDIF
        \ENDFOR
    \ENDFOR
\end{algorithmic}
\end{algorithm}


To facilitate the analysis, we denote by $M_{i,t}$ the total update times of agent $i$ in the time interval $[t,t+B)$. Due to partial asynchronism as assumed in Assumption~\ref{assump:delay}, we have $M_{i,t}\geq 1$, for all $i$ and $t$.
For the $m$-th update, $m=1,\ldots,M_{i,t}$, we denote the corresponding update time step by $\tau_{i,t}^m$ with $\tau_{i,t}^m \in T^i$.
Specifically, $\tau_{i,t}^m$ denotes the time step of the $m$-th update of agent $i$ in the interval $[t,t+B)$. 

Based on the definitions above, in the time interval $[t,t+B)$, the update equation \eqref{eq:update:FO} can be equivalently written as 
\begin{align}\label{eq:FO:t1}
    x_{i,\tau_{i,t}^m+1} = \mathcal{P}_{\mathcal{X}_i}(x_{i,\tau_{i,t}^m}-\eta \nabla_i C_i(x_{\tau_{i,t}^m}) ),
\end{align}
for $m=1,\ldots, M_{i,t}$. By definition of $\tau_{i,t}^m$, we have $x_{i,\tau_{i,t}^1} = x_{i,\tau_{i,t}^1 -1} =\cdots= x_{i,t}$, $x_{i,\tau_{i,t}^{M_{i,t}}+1} = x_{i,t+B}$ and $ x_{i,\tau_{i,t}^{m}} = x_{i,\tau_{i,t}^{m-1}+1}$ for $m\geq 2$. By defining $\tau_{i,t}^{0} := t-1$, we have the following consistent equality that 
\begin{align}\label{eq:FO:t2}
    x_{i,\tau_{i,t}^{m}} = x_{i,\tau_{i,t}^{m-1}+1}, \quad \forall m=1,\ldots,M_{i,t}.
\end{align}

We make the following assumption on the gradient of the cost function before the analysis of convergence.
\begin{assumption}\label{assump:grad:bound}
For each agent $i=1,\ldots,N$, $\left\|\nabla_i C_i(x) \right\|\leq U$ for all $x\in \mathcal{X}$.
\end{assumption}
Assumption~\ref{assump:grad:bound} is common in the literature, see, e.g., \cite{duvocelle2023multiagent}. Now we are ready to present the convergence result for Algorithm~\ref{alg:FO}. The proof can be found in the Appendix.

\begin{theorem}\label{thm:FO}
Let Assumptions~\ref{assump:delay}--~\ref{assump:grad:bound} hold and select $\eta = \frac{ B\ln(T/B)}{ \varepsilon T}$, where $\varepsilon := \min_i \left\{ \mu_i - \frac{1}{r_i} \sum_{j\neq i}r_j L_{ij} \right\}>0$. Then, Algorithm~\ref{alg:FO} satisfies that 
\begin{align}
    \max_i\left\| x_{i,T} - x_i^*\right\|^2 = \mathcal{O}(\frac{ B^3 \ln (T/B)}{T}).
\end{align}
\end{theorem}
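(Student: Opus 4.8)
# Proof Proposal for Theorem~\ref{thm:FO}

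The plan is to establish a contraction-type recursion on the tracking error $\|x_{i,t+B}-x_i^*\|$ over one window of length $B$, leveraging the quasidominance structure of Assumption~\ref{assump:cost} through the weighted vector $r$, and then unroll this recursion over $T/B$ windows. The natural potential is a weighted max-norm $V_t := \max_i \frac{1}{r_i}\|x_{i,t}-x_i^*\|$ (or a weighted Euclidean analogue), since the quasidominance inequality $r_i\mu_i > \sum_{j\neq i} r_j L_{ij}$ is precisely what makes such a weighted norm contract. First I would analyze a single update of agent $i$. Using the projection nonexpansiveness, strong convexity of $C_i$ in $x_i$, and the fact that $x^*$ is a fixed point of the projected gradient map (so $\nabla_i C_i(x^*)$ is balanced against the normal cone), I would bound $\|x_{i,\tau+1}-x_i^*\|^2$ in terms of $\|x_{i,\tau}-x_i^*\|^2$, the cross terms $\sum_{j\neq i}L_{ij}\|x_{j,\tau}-x_j^*\|$ coming from the $L_{ij}$-Lipschitz dependence of $\nabla_i C_i$ on $x_j$, and an $\mathcal{O}(\eta^2 U^2)$ term from Assumption~\ref{assump:grad:bound}.

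The key technical point is handling the \emph{staleness}: when agent $i$ updates at time $\tau_{i,t}^m$, the actions $x_{j,\tau_{i,t}^m}$ of the other agents are not equal to $x_{j,t}$, since those agents may have updated in between. I would control the drift $\|x_{j,\tau}-x_{j,t}\| \le \eta U \cdot(\text{number of updates of }j) \le \eta U\, M_{j,t}$ using Assumption~\ref{assump:grad:bound}, and absorb it into the higher-order terms; this is where the factor $B$ enters, because within a window each agent updates at most $B$ times, so the accumulated drift is $\mathcal{O}(\eta B U)$ and its contribution to the squared error is $\mathcal{O}(\eta^2 B^2 U^2)$. Summing agent $i$'s $M_{i,t}\ge 1$ updates across the window via the telescoping identities~\eqref{eq:FO:t1}--\eqref{eq:FO:t2} and collecting the per-agent contractions into the weighted potential, I expect a one-window bound of the form
\begin{align}\label{eq:proof:recursion}
    V_{t+B} \le (1-\eta\varepsilon)\, V_t + \mathcal{O}(\eta^2 B^2 U^2),
\end{align}
where $\varepsilon = \min_i\{\mu_i - \frac{1}{r_i}\sum_{j\neq i}r_j L_{ij}\}$ is exactly the quasidominance margin; here the $-\eta\varepsilon V_t$ term arises because each agent contracts by at least $(1-\eta\mu_i)$ per update while the cross-coupling inflates the weighted error by at most $\eta\sum_{j\neq i}\frac{r_j}{r_i}L_{ij}$, and the net effect is governed by $\varepsilon$.

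The main obstacle I anticipate is making the weighted-norm contraction rigorous when multiple agents update different numbers of times within the same window: the contraction factors $M_{i,t}$ differ across $i$, and one must argue that the quasidominance inequality survives this heterogeneity (as in the simple-case observation that $A_t J$ remains quasidominant for any positive diagonal $A_t$). I would address this by lower-bounding each agent's contraction using $M_{i,t}\ge 1$ while upper-bounding the coupling terms by their maximal accumulation over the window, so that the worst case still yields the margin $\varepsilon$. Once~\eqref{eq:proof:recursion} is established, I would iterate it over $k = T/B$ windows to obtain
\begin{align*}
    V_T \le (1-\eta\varepsilon)^{T/B} V_0 + \mathcal{O}\!\left(\frac{\eta B^2 U^2}{\varepsilon}\right),
\end{align*}
and finally substitute $\eta = \frac{B\ln(T/B)}{\varepsilon T}$: the geometric term becomes $\exp(-\eta\varepsilon T/B) = \exp(-\ln(T/B)) = B/T$, contributing $\mathcal{O}(B/T)$, while the residual term scales as $\frac{\eta B^2}{\varepsilon} = \mathcal{O}\!\big(\frac{B^3\ln(T/B)}{T}\big)$. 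The second term dominates, and converting the weighted potential back to $\max_i\|x_{i,T}-x_i^*\|^2$ (absorbing the fixed constants $r_i$) yields the claimed $\mathcal{O}\!\big(\frac{B^3\ln(T/B)}{T}\big)$ rate.
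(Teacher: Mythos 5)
Your proposal is correct and follows essentially the same route as the paper's proof: a per-update projection/strong-convexity bound, control of staleness via the $\eta U B$ drift bound, telescoping over the window $[t,t+B)$, a weighted max-norm Lyapunov function (the paper uses $V_t=\max_i \|x_{i,t}-x_i^*\|^2/r_i^2$, the square of yours), the same use of $M_{i,t}\ge 1$ for the contraction and $M_{i,t}\le B$ for the error terms, and the same unrolling over $T/B$ windows with the same step-size substitution.
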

Theorem~\ref{thm:FO} establishes last-iterate convergence for Algorithm~\ref{alg:FO}, showing that a larger value of $B$, which indicates more complicated asynchronism, leads to slower convergence. This convergence guarantee implies $ \left\| x_T - x^*\right\|^2 = \mathcal{O}(\frac{ B^3 \ln (T/B)}{T})$ since $\left\| x_T - x^*\right\|^2 = \sum_{i} \left\| x_{i,T} - x_i^*\right\|^2 \leq N \max_i\left\| x_{i,T} - x_i^*\right\|^2$. Notably, the convergence rate with respect to $T$ matches the best-known result for online learning in synchronous games, as presented in \cite{jordan2024adaptive}.

The convergence is demonstrated using a measure of the maximum distance of each agent's error to the Nash equilibrium. This measure comes from the proof where we use the Lyapunov function $V_t = \max_i \frac{\left\| x_{i,t} - x_i^* \right\|^2}{r_i^2}$.  Dividing by $r_i^2$ in the Lyapunov function serves to normalize each agent's error, balancing their contributions based on their own properties such as strong convexity and coupling effects. This scaling allows for a unified convergence analysis in asynchronous games.

\section{Zeroth-Order Asynchronous Games}\label{sec:4_ZO}
In this section, we propose a zeroth-order gradient descent algorithm for asynchronous games and analyze its convergence.

\begin{algorithm}[t]
\caption{Zeroth-order asynchronous games} \label{alg:ZO}
\begin{algorithmic}[1]
    \STATE \textbf{Input}: Step size $\eta$, perturbation size $\delta$, initial value $x_{i,1}$, $\hat{x}_{i,0}$, $i=1\ldots,N$, and length of interval $T$.
    \FOR{$ t=1,\ldots,T$}
        \FOR{agent $i$}
            \IF{ $t\in T^i$}
                \STATE Sample $u_{i,t} \in \mathbb{S}^d$
                \STATE Play the action $\hat{x}_{i,t}=x_{i,t}+\delta u_{i,t} $
            \ELSE
                \STATE Play the action $\hat{x}_{i,t} = \hat{x}_{i,t-1}$
            \ENDIF
        \ENDFOR
        \FOR{agent $i$}
            \IF{ $t\in T^i$}
                \STATE $x_{i,t+1} = \mathcal{P}_{\mathcal{X}_i^{\delta}}(x_{i,t}-\eta g_{i,t} )$, where $g_{i,t}=\frac{d}{\delta} C_i(\hat{x}_t) u_{i,t}$
            \ELSE
                \STATE $x_{i,t+1}=x_{i,t}$
            \ENDIF
        \ENDFOR
    \ENDFOR
\end{algorithmic}
\end{algorithm}

We consider the case that each agent cannnot observe other agents' actions and only receives the feedback on cost evaluations. The specific algorithm is presented in Algorithm~\ref{alg:ZO}. At each time step $t$, if $t\in \mathbb{T}^i$, (i.e., agent $i$ is scheduled to update the action at the next time step $t+1$), agent $i$ becomes active to gather information for the action update. To do so, agent $i$ samples a random vector $u_{i,t}$ from the unit sphere $\mathbb{S}^d \in \mathbb{R}^d$ and plays the perturbed action  $\hat{x}_{i,t} = x_{i,t}+ \delta u_{i,t}$ at the time step $t$, where $\delta$ is the perturbation size.
If $t \not\in \mathbb{T}^i$, (i.e., agent $i$ is scheduled to keep the action), agent $i$ simply plays the perturbed action same as the previous step, i.e., $\hat{x}_{i,t}=\hat{x}_{i,t-1}$.
After agents play the perturbed actions, they update their actions as follows:
\begin{align}\label{eq:update:ZO}
    x_{i,t+1} = \left\{ \begin{array}{cc} \mathcal{P}_{\mathcal{X}_i^{\delta}}(x_{i,t}-\eta g_{i,t} ), & t\in T^i, \\ x_{i,t}, & {\rm{otherwise}},\end{array} \right. 
\end{align}
where $g_{i,t}=\frac{d}{\delta} C_i(\hat{x}_t) u_{i,t}$ and $\hat{x}_t=(\hat{x}_{1,t}^\top,\dots,\hat{x}_{N,t}^\top)^\top$. 
The projection set is defined as
$ \mathcal{X}_{i}^{\delta} = \{ x_{i}\in \mathcal{X}_{i} \vert \frac{1}{1-\delta/R}x_{i} \in \mathcal{X}_{i}\}$. The projection step guarantees the feasibility of the sampled action $\hat{x}_{i,t}$, since 
\begin{align*}
    \bigg(1-\frac{\delta}{R}\bigg) \mathcal{X}_{i} \oplus \delta \mathbb{S} &= \bigg(1-\frac{\delta}{R}\bigg) \mathcal{X}_{i} \oplus \frac{\delta}{R} R   \mathbb{S}\\
    & \subseteq \bigg(1-\frac{\delta}{R}\bigg)\mathcal{X}_{i} \oplus \frac{\delta}{R} \mathcal{X}_{i} = \mathcal{X}_{i}.
\end{align*}
Here, $\oplus$ denotes the Minkowski sum of two sets.

Same as the first-order case, we denote by $M_{i,t}$ the total update times of agent $i$ in the time interval $[t,t+B)$ and $\tau_{i,t}^m$ the time step for the $m$-th update, $m=1,\ldots,M_{i,t}$.
We obtain that the update formula \eqref{eq:update:ZO} is equivalent to 
\begin{align}\label{eq:ZO:t1}
    x_{i,\tau_{i,t}^m+1} = \mathcal{P}_{\mathcal{X}_i^{\delta}}(x_{i,\tau_{i,t}^m}-\eta g_{i,\tau_{i,t}^m} ).
\end{align}

Before the analysis of convergence, we present the following assumption on the cost function $C_i$.
\begin{assumption}\label{assump:C:bound}
For each agent $i=1,\ldots,N$, $|C_i(x)|\leq U_c$ for all $x\in \mathcal{X}$.
\end{assumption}
Assumption~\ref{assump:C:bound} is commonly employed in zeroth-order optimization for games, as discussed in works such as \cite{bravo2018bandit,lin2021doubly}.
Now we are ready to present the convergence analysis for Algorithm~\ref{alg:ZO}. The proof can be found in the Appendix.
\begin{theorem}\label{thm:ZO}
Let Assumptions~\ref{assump:delay}--\ref{assump:C:bound} hold, and select $\delta = \frac{B}{T^{1/3}}$ and $\eta = \frac{ B\ln(T/B)}{ \varepsilon T}$ where $\varepsilon = \min_i \mu_i - \frac{1}{r_i} \sum_{j\neq i}r_j L_{ij}$. Then, Algorithm~\ref{alg:FO} achieves convergence 
\begin{align}
    \max_i \EE \left\| x_{i,T} - x_i^*\right\|^2 = \mathcal{O}(\frac{B^2 \ln (T/B)}{T^{1/3}}).
\end{align}
\end{theorem}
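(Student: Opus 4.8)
The plan is to reproduce the contraction argument behind Theorem~\ref{thm:FO} for the same normalized Lyapunov function $V_t = \max_i \|x_{i,t}-x_i^*\|^2/r_i^2$, now in expectation, with the exact partial gradient $\nabla_i C_i(x_t)$ replaced by the single-point estimate $g_{i,t}=\frac{d}{\delta}C_i(\hat x_t)u_{i,t}$. Fixing an interval $[t,t+B)$ and an agent $i$ that updates at $\tau=\tau_{i,t}^m$, I would start from \eqref{eq:ZO:t1}, use nonexpansiveness of $\mathcal{P}_{\mathcal{X}_i^\delta}$ against $x_i^{*,\delta}:=\mathcal{P}_{\mathcal{X}_i^\delta}(x_i^*)$, and expand
\begin{align*}
    \|x_{i,\tau+1}-x_i^{*,\delta}\|^2 \le \|x_{i,\tau}-x_i^{*,\delta}\|^2 - 2\eta\langle g_{i,\tau}, x_{i,\tau}-x_i^{*,\delta}\rangle + \eta^2\|g_{i,\tau}\|^2 ,
\end{align*}
and then take conditional expectation over $u_{i,\tau}$ given the history.

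Three oracle estimates drive the argument. First, the standard spherical-smoothing identity gives $\EE[g_{i,\tau}\mid\mathcal{F}_\tau]=\nabla_i\hat C_i^\delta(x_{i,\tau},\hat x_{-i,\tau})$, where $\hat C_i^\delta$ is the smoothing of $C_i$ over a ball in the $i$-th block. Second, I would bound the bias of this conditional mean against $\nabla_i C_i(x_\tau)$ by splitting it into a smoothing part, $\|\nabla_i\hat C_i^\delta-\nabla_i C_i\|\le L_{ii}\delta$, and a cross-perturbation part $\sum_{j\neq i}L_{ij}\|\hat x_{j,\tau}-x_{j,\tau}\|$; the delicate point is that under asynchrony an inactive agent $j$ replays a stale perturbation, so $\hat x_{j,\tau}$ is built from an earlier action and $\|\hat x_{j,\tau}-x_{j,\tau}\|\le \delta+\eta\frac{d}{\delta}U_c$ after accounting for one intervening gradient step, using $\|g_{j,\cdot}\|\le \frac{d}{\delta}U_c$. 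Hence the total bias is $O(\delta+\eta d/\delta)$. Third, $\EE[\|g_{i,\tau}\|^2\mid\mathcal F_\tau]\le \frac{d^2}{\delta^2}U_c^2$ by Assumption~\ref{assump:C:bound} and $\|u_{i,\tau}\|=1$.

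With these in hand the cross term reduces to $\langle\nabla_i C_i(x_\tau),x_{i,\tau}-x_i^*\rangle$ plus controllable remainders. Writing $e_{i,\tau}:=x_{i,\tau}-x_i^*$, the leading term is lower bounded exactly as in the strict-monotonicity lemma, using $\mu_i$-strong convexity, the $L_{ij}$-Lipschitz property, and the variational inequality at $x^*$, yielding $\mu_i\|e_{i,\tau}\|^2-\sum_{j\neq i}L_{ij}\|e_{j,\tau}\|\|e_{i,\tau}\|$; the gap $\|x_i^{*,\delta}-x_i^*\|=O(\delta)$ contributes $O(\eta\delta)$, and the bias contributes $O(\eta\delta+\eta^2 d/\delta)$ after Young's inequality. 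Evaluating at the maximizing index and invoking the quasidominance inequality $r_i\mu_i>\sum_{j\neq i}r_jL_{ij}$ (Assumption~\ref{assump:cost}) to extract the factor $\varepsilon$ exactly as in Theorem~\ref{thm:FO}, I obtain a per-interval recursion $\EE V_{t+B}\le(1-c\eta)\EE V_t + E$, where $E$ aggregates the additive errors $O(\eta\delta)$, $O(\eta\delta^2)$ and the variance term $O(\eta^2 d^2/\delta^2)$, each amplified by the $B$-dependent factors that arise when intermediate errors $\|e_{j,s}\|$, $s\in[t,t+B)$, are bounded by their interval maxima.

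Finally I would iterate this recursion over the $T/B$ intervals, sum the geometric series (the initial condition decays like $(1-c\eta)^{T/B}$, negligible for the chosen $\eta$), substitute $\eta=\frac{B\ln(T/B)}{\varepsilon T}$ and $\delta=\frac{B}{T^{1/3}}$, and convert back from $x_i^{*,\delta}$ to $x_i^*$ at a cost of $O(\delta^2)$. Balancing the bias error ($\propto\delta$) against the variance error ($\propto d^2/\delta^2$) is exactly what fixes $\delta\sim T^{-1/3}$ and produces the stated $\mathcal{O}(B^2\ln(T/B)/T^{1/3})$ rate. I expect the main obstacle to be the coupled bias/variance control under asynchrony: because $g_{i,\tau}$ depends on the other agents' perturbed and possibly stale actions $\hat x_{-i,\tau}$, the cross-perturbation bias must be bounded uniformly over the interval while the variance $d^2/\delta^2$ is simultaneously prevented from overwhelming the $O(\eta)$ contraction. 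This tension, absent in the first-order analysis, is what degrades the rate from $T^{-1}$ to $T^{-1/3}$.
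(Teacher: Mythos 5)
Your proposal follows essentially the same route as the paper's proof: the normalized Lyapunov function $\max_i \EE\|x_{i,t}-x_{\delta_i}^*\|^2/r_i^2$ measured against a $\delta$-shifted equilibrium, a per-$B$-interval contraction extracted from quasidominance, the spherical-smoothing (Stokes) identity $\EE[g_{i,\tau}\mid\mathcal{F}_\tau]=\EE_{v_i\in\BB}[\nabla_i C_i(x_{i,\tau}+\delta v_i,\hat{x}_{-i,\tau})]$ with the bias from stale perturbations bounded via Lipschitz continuity, and the same $\delta\sim T^{-1/3}$ bias--variance balance. The only deviations are immaterial: you project $x_i^*$ onto $\mathcal{X}_i^\delta$ where the paper rescales by $(1-\delta/R)$, and the final conversion from $x_{\delta_i}^*$ back to $x_i^*$ actually costs $\mathcal{O}(\delta)$ from the cross term rather than $\mathcal{O}(\delta^2)$, which still fits within the claimed rate since $\delta=B/T^{1/3}$.
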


Theorem~\ref{thm:ZO} establishes the convergence of zeroth-order asynchronous learning for the class of games that satisfy Assumption~\ref{assump:cost}.  A larger value of $B$, indicating more complex asynchronism, leads to slower convergence. The convergence rate with respect to $T$, specifically $\tilde{\mathcal{O}}(T^{-1/3})$, matches the result in \cite{bravo2018bandit} for strongly monotone games. Recently, \cite{lin2021doubly} improved the convergence result, achieving the optimal rate of $\tilde{\mathcal{O}}(T^{-1/2})$ using a mirror descent algorithm.  Investigating the optimal rate in the asynchronous setting remains a challenging problem, which we leave for future work.

Analyzing zeroth-order asynchronous games is notably more complex than dealing with the first-order case. In zeroth-order optimization, agents lack direct access to the gradients of their objective functions. Instead, they must rely on additional procedures to estimate it, i.e., by sampling random perturbations, playing perturbed actions, and observing the resulting changes in their cost evaluations. Each agent performs this estimation only when it intends to update its action in the next time step. However, due to asynchrony, agents do not necessarily perform these procedures simultaneously; their played actions may remain unchanged from some earlier time steps, and the corresponding random vectors are those sampled at those times. This lack of synchronization introduces disorder into the system's randomness.  Consequently, each agent's gradient estimate is no longer an unbiased estimate of a specific smoothed function, as it would be in synchronous games. Despite this challenge, we demonstrate that the quality of the gradient estimates in asynchronous games can still be maintained. Building on this fact, we construct our proof using the Lyapunov function $V_t^{\delta}=\max_i\left\| x_{i,t} - x_{\delta_i}^*\right\|^2$, where $x_{\delta_i}^* = (1-\frac{\delta}{R})x_{i}^*$.

Theorems~\ref{thm:FO} and \ref{alg:ZO} show that first- and zeroth-order learning achieves convergence rates $\mathcal{O}(\frac{ B^3 \ln (T/B)}{T})$ and $\mathcal{O}(\frac{B^2 \ln (T/B)}{T^{1/3}})$, respectively. 
While the first-order method has a worse dependence on the parameter $B$ compared to the zeroth-order method, it benefits from a faster improvement with respect to $T$.
Consequently, when $T^{2/3}>B$, the first-order method achieves a better convergence rate.

\section{Numerical Experiments}\label{sec:5_experiments}

In this section, we verify our theoretical results using a Cournot game example. 
Consider a market consisting of $N$ agents, where each agent $i$ contributes a supply quantity $x_i$ to the market and incurs the production cost $c_i x_i$. The aggregate supply from all agents determines the market price $p(x)=e_i - 0.5 J_{ii} x_i - \sum_{j \neq i} J_{ij} x_j$. Then, each agent $i$ incurs an overall cost $C_i(x) = -x_i p(x)+c_i x_i $. The objective for each agent is to minimize their individual costs through asynchronous gradient-based dynamics.

We first consider a setting that does not satisfy the quasidominant condition. Specifically, we set $J= [0.1, -2, 1; -2, 0.2, 4;-3, -4, 1.7]$, $e=[2.6, 2.1, 2.3]$, and $c=[0.2, 0.1, 0.5]$. It can be verified that $J$ is Hurwitz, so the synchronous dynamics converge, as shown in Fig.~\ref{fig_s1}.
For asynchronous updates, we consider that agents update periodically with periods $1,2,2$, respectively.
Following the discussion in Section~\ref{sec:2_condition}.A, agents will update twice, once, once, respectively, at every two time steps. We define $A = {\rm{diag}}([2,1,1])$ and find that $AJ$ is not Hurwitz. As shown in Fig.~\ref{fig_s1}, the asynchronous games diverge no matter how small the step size is.

Next, we consider a different configuration where we set 
$J = [1, -0.3, 0.4;0.2, 1, -0.5;0.5, 1.2, 2]$, $e=[1.6, 4.4, 1.0]$ and $c=[0.2, 0.1, 0.5]$. It can be easily verified that the above setting satisfies the quasidominant condition.
For asynchronous updates, we consider that agents update periodically with periods $[7,5,3]$, respectively.
As shown in Fig.~\ref{fig_s2}, both synchronous and asynchronous updates lead to convergence.
We also evaluate the convergence rate of first-order and zeroth-order asynchronous learning algorithms. Fig.~\ref{fig_FOZO} shows that the first-order algorithm converges much faster than the zeroth-order method.
Moreover, we examine the effect of large parameter values for $B$ by comparing two setups with update periods [7,5,3] and [17,13,7]. As shown in Fig.~\ref{fig_diffD}, large periods lead to slower convergence.

\begin{figure}[t] 
\begin{center}
\centerline{\includegraphics[width=0.9\columnwidth]{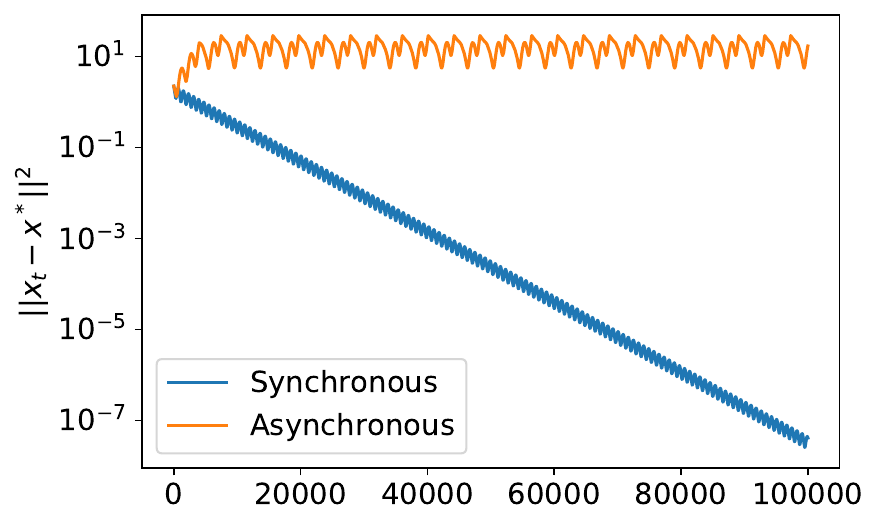}}
\caption{Convergence results of synchronous and asynchronous first-order dynamics when the quasidominant condition is not satisfied.}
\label{fig_s1}
\end{center}
\vskip -0.2in
\end{figure}

\begin{figure}[t] 
\begin{center}
\centerline{\includegraphics[width=0.9\columnwidth]{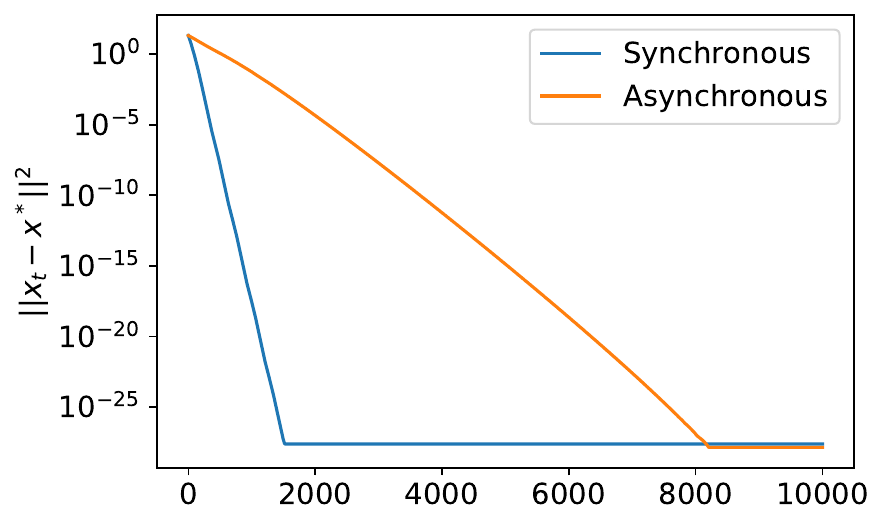}}
\caption{Convergence results of synchronous and asynchronous first-order dynamics when the quasidominant condition is satisfied.}
\label{fig_s2}
\end{center}
\vskip -0.2in
\end{figure}

\begin{figure}[t] 
\begin{center}
\centerline{\includegraphics[width=0.9\columnwidth]{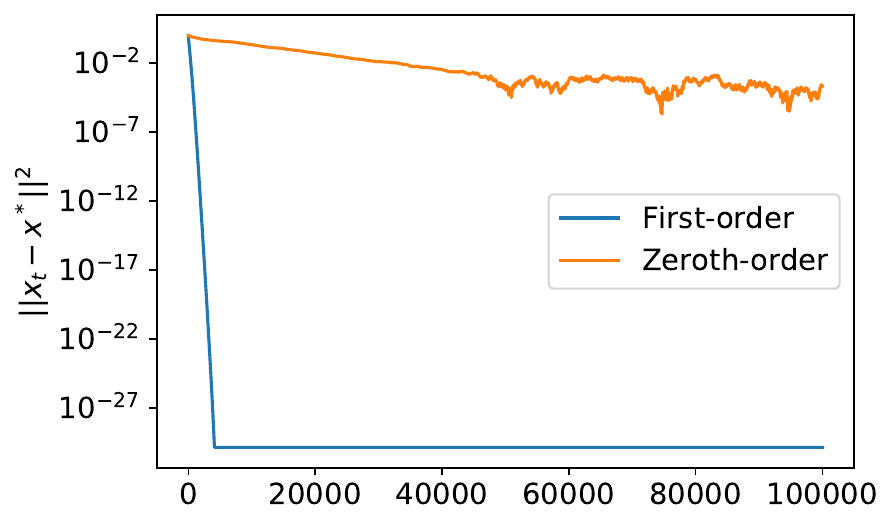}}
\caption{Comparitive results of Algorithms 1 and 2 in asynchronous games.}
\label{fig_FOZO}
\end{center}
\vskip -0.2in
\end{figure}

\begin{figure}[t] 
\begin{center}
\centerline{\includegraphics[width=0.9\columnwidth]{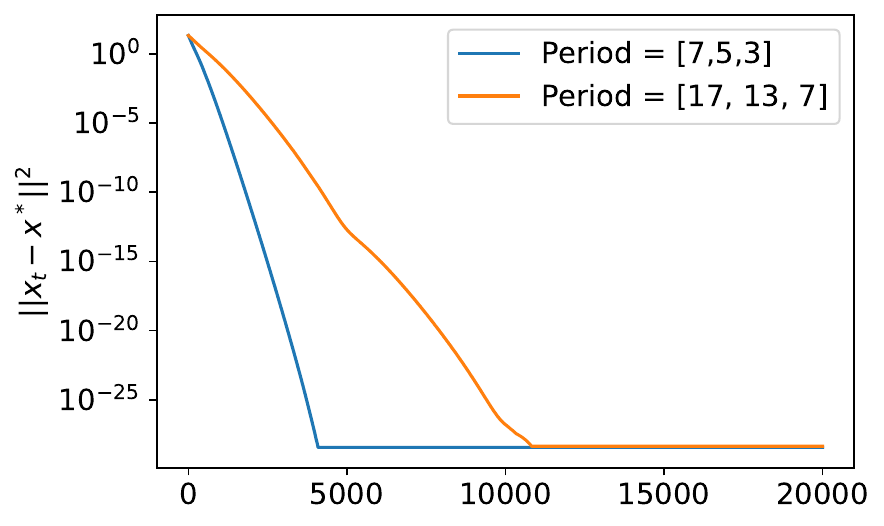}}
\caption{Convergence results of Algorithm~\ref{alg:FO} with update periods [7,5,3] and [17,13,7].}
\label{fig_diffD}
\end{center}
\vskip -0.2in
\end{figure}

\section{Conclusion}\label{sec:6_conclusion}

In this work, we analyzed the discrete-time dynamics of asynchronous games. 
Under the partial asynchronism assumption, we explored the condition that ensures convergence to the Nash equilibrium for partial asynchronous updates.
We proposed the quasidominant condition and observed that asynchronous games may diverge if this condition fails to hold. 
Further, we analyzed the convergence rates for both first- and zeroth-order learning algorithms. 
We provided numerical simulations to illustrate our results. 
Future research directions include improving the convergence rate of the zeroth-order algorithm and exploring some other classes of asynchronous games.

\appendix\label{sec:appendix}
\subsection{Proof of Theorem 1}
Given \eqref{eq:FO:t1}, we have
\begin{align}\label{eq:FO:t3}
    &\left\| x_{i,\tau_{i,t}^m+1} - x_i^* \right\|^2 \nonumber \\
    &\leq \left\| x_{i,\tau_{i,t}^m}-\eta \nabla_i C_i(x_{\tau_{i,t}^m}) - x_i^* \right\|^2 \nonumber \\
    & = \left\| x_{i,\tau_{i,t}^{m-1} +1}-\eta \nabla_i C_i(x_{\tau_{i,t}^m}) - x_i^* \right\|^2 \nonumber \\
    & = \left\| x_{i,\tau_{i,t}^{m-1} +1} - x_i^*\right\|^2  + \eta^2 \left\| \nabla_i C_i(x_{\tau_{i,t}^m}) \right\|^2  \nonumber \\
    & \quad -2\eta\left\langle x_{i,\tau_{i,t}^{m-1} +1} - x_i^*,\nabla_i C_i(x_{\tau_{i,t}^m}) \right\rangle,
\end{align}
where the first inequality follows since the projection operator is nonexpansive and the first equality follows from \eqref{eq:FO:t2}.
Summing up \eqref{eq:FO:t3} over $m=1,\ldots,M_{i,t}$ and rearranging, we have
\begin{align}\label{eq:FO:t4}
    &\Big\| x_{i,\tau_{i,t}^{M_{i,t}}+1} - x_i^* \Big\|^2 \nonumber \\
    &\leq \left\| x_{i,\tau_{i,t}^{0}+1} - x_i^* \right\|^2 + \eta^2 \sum_{m=1}^{M_{i,t}} \left\| \nabla_i C_i(x_{\tau_{i,t}^m}) \right\|^2 \nonumber \\
    &\quad -2\eta  \sum_{m=1}^{M_{i,t}} \left\langle x_{i,\tau_{i,t}^{m-1} +1} - x_i^*,\nabla_i C_i(x_{\tau_{i,t}^m}) \right\rangle.
\end{align}
Since $x_{i,\tau_{i,t}^{M_{i,t}}+1} = x_{i,t+B}$ and $x_{i,\tau_{i,t}^{0}+1} = x_{i,t}$, \eqref{eq:FO:t4} yields
\begin{align}\label{eq:FO:t5}
    &\left\|x_{i,t+B} - x_i^*\right\|^2 \nonumber \\
    &\leq \left\| x_{i,t} - x_i^* \right\|^2 + \eta^2 \sum_{m=1}^{M_{i,t}} \left\| \nabla_i C_i(x_{\tau_{i,t}^m}) \right\|^2 \nonumber \\
    &\quad -2\eta  \sum_{m=1}^{M_{i,t}} \left\langle x_{i,\tau_{i,t}^{m-1} +1} - x_i^*,\nabla_i C_i(x_{\tau_{i,t}^m}) \right\rangle \nonumber \\
    &\leq \left\| x_{i,t} - x_i^* \right\|^2 + \eta^2 M_{i,t} U^2 \nonumber \\
    &\quad -2\eta  \sum_{m=1}^{M_{i,t}} \left\langle x_{i,\tau_{i,t}^{m-1} +1} - x_i^*,\nabla_i C_i(x_{\tau_{i,t}^m}) \right\rangle,
\end{align}
where the last inequality follows from Assumption~\ref{assump:grad:bound}.
For the last term, we have
\begin{align}\label{eq:FO:t6}
    &\left\langle x_{i,\tau_{i,t}^{m-1} +1} - x_i^*,\nabla_i C_i(x_{\tau_{i,t}^m}) \right\rangle \nonumber \\
    & = \left\langle x_{i,\tau_{i,t}^{m-1} +1} - x_i^*,\nabla_i C_i(x_{i,\tau_{i,t}^{m-1}+1},x_{-i,\tau_{i,t}^m}) \right\rangle \nonumber \\
    & = \left\langle x_{i,t} - x_i^* + x_{i,\tau_{i,t}^{m-1} +1} - x_{i,t},\nabla_i C_i(x_{i,\tau_{i,t}^{m-1}+1},x_{-i,\tau_{i,t}^m}) \right\rangle \nonumber \\
    &\geq \left\langle x_{i,t} - x_i^*,\nabla_i C_i(x_{i,\tau_{i,t}^{m-1}+1},x_{-i,\tau_{i,t}^m}) \right\rangle \nonumber \\
    &\quad - \left\| x_{i,\tau_{i,t}^{m-1} +1} - x_{i,t} \right\| \left\| \nabla_i C_i(x_{i,\tau_{i,t}^{m-1}+1},x_{-i,\tau_{i,t}^m}) \right\| \nonumber \\
    &\geq -(m-1) \eta U^2 + \left\langle x_{i,t} - x_i^*,\nabla_i C_i(x_{i,\tau_{i,t}^{m-1}+1},x_{-i,\tau_{i,t}^m}) \right\rangle,
\end{align}
where the first equality follows from \eqref{eq:FO:t2}. 
The last inequality holds since 
\begin{align}\label{eq:FO:t7}
    &\left\|x_{i,\tau_{i,t}^{m-1} +1} - x_{i,t} \right\| =  \left\| x_{i,\tau_{i,t}^{m} } - x_{i,\tau_{i,t}^{1} } \right\| \nonumber \\
    &= \left\|\sum_{k=1}^{m-1} \left( x_{i,\tau_{i,t}^{k+1} } - x_{i,\tau_{i,t}^{k} } \right) \right\| \leq \sum_{k=1}^{m-1}\left\| x_{i,\tau_{i,t}^{k+1} } - x_{i,\tau_{i,t}^{k} } \right\| \nonumber\\
    & = \sum_{k=1}^{m-1}\left\| x_{i,\tau_{i,t}^{k}+1 } - x_{i,\tau_{i,t}^{k} } \right\| \leq \sum_{k=1}^{m-1}  \left\| \eta \nabla_i C_i(x_{\tau_{i,t}^k}) \right\| \nonumber\\
    &\leq \eta (m-1) U.
\end{align}
Besides, we have 
\begin{align}\label{eq:FO:t8}
    &\left\langle x_{i,t} - x_i^*,\nabla_i C_i(x_{i,\tau_{i,t}^{m-1}+1},x_{-i,\tau_{i,t}^m}) \right\rangle \nonumber\\
    & = \Big\langle x_{i,t} - x_i^*, \nabla_i C_i(x_{i,t},x_{-i,\tau_{i,t}^m})  \nonumber\\
    &\quad + \nabla_i C_i( x_{i,\tau_{i,t}^{m-1}+1},x_{-i,\tau_{i,t}^m})-\nabla_i C_i(x_{i,t},x_{-i,\tau_{i,t}^m}) \Big\rangle \nonumber \\
    &\geq \left\langle x_{i,t} - x_i^*,\nabla_i C_i(x_{i,t},x_{-i,\tau_{i,t}^m})\right\rangle  \nonumber \\
    &\quad - DL_{ii}\left\| x_{i,t}- x_{i,\tau_{i,t}^{m-1}+1}\right\| \nonumber \\
    &\geq \left\langle x_{i,t} - x_i^*,\nabla_i C_i(x_{i,t},x_{-i,\tau_{i,t}^m})\right\rangle - DL_{ii} \eta (m-1)U \nonumber \\
    & = \left\langle x_{i,t} - x_i^*,\nabla_i C_i(x_t)\right\rangle - DL_{ii} \eta (m-1)U \nonumber \\
    &\quad + \left\langle x_{i,t} - x_i^*,\nabla_i C_i(x_{i,t},x_{-i,\tau_{i,t}^m}) - \nabla_i C_i(x_t)\right\rangle   \nonumber \\
    &\geq \left\langle x_{i,t} - x_i^*,\nabla_i C_i(x_t)\right\rangle - DL_{ii} \eta (m-1)U \nonumber \\
    &\quad  - D \sum_{j\neq i} L_{ij} \left\| x_{j,t} - x_{j,\tau_{i,t}^m}\right\| \nonumber \\
    &\geq \left\langle x_{i,t} - x_i^*,\nabla_i C_i(x_t)\right\rangle - DL_{ii} \eta (m-1)U \nonumber \\
    &\quad - D\eta B U \sum_{j\neq i} L_{ij},
\end{align}
where the first inequality follows from Assumption~\ref{assump:cost} and the second inequality follows from \eqref{eq:FO:t7}. The last inequality follows since $\left\| x_{j,t} - x_{j,\tau_{i,t}^m}\right\|\leq \sum_{k=t}^{\tau_{i,t}^m -1} \left\| x_{j,k} - x_{j,k+1} \right\| \leq (\tau_{i,t}^m - t)\eta U \leq \eta U B$.

In addition, we have
\begin{align}\label{eq:FO:t9}
    &\left\langle x_{i,t} - x_i^*,\nabla_i C_i(x_t)\right\rangle \nonumber \\
    & \geq  \left\langle x_{i,t} - x_i^*,\nabla_i C_i(x_t) - \nabla_i C_i(x^*)\right\rangle \nonumber \\
    & = \left\langle x_{i,t} - x_i^*,\nabla_i C_i(x_t) - \nabla_i C_i(x_{i}^*,x_{-i,t}) \right\rangle \nonumber \\
    &\quad + \left\langle x_{i,t} - x_i^*,\nabla_i C_i(x_{i}^*,x_{-i,t})- \nabla_i C_i(x^*) \right\rangle \nonumber \\
    &\geq \mu_i \left\| x_{i,t} - x_i^*\right\|^2 \nonumber \\
    &\quad + \left\langle x_{i,t} - x_i^*,\nabla_i C_i(x_{i}^*,x_{-i,t})- \nabla_i C_i(x^*) \right\rangle \nonumber \\
    &\geq \mu_i \left\| x_{i,t} - x_i^*\right\|^2 - \left\| x_{i,t}- x_i^* \right\| \sum_{j\neq i} L_{ij} \left\| x_{j,t} - x_j^*\right\|,
\end{align}
where the first inequality follows from the first-order optimality condition $\langle \nabla_i \mathcal{C}_i(x^{*}), x_i - x_i^{*} \rangle \geq 0$, for all $x_i\in \mathcal{X}_i$, and the remaining inequalities follow from Assumption~\ref{assump:cost}.

Substituting \eqref{eq:FO:t8} and \eqref{eq:FO:t9} into \eqref{eq:FO:t6}, we have
\begin{align}\label{eq:FO:t10}
    &\left\langle x_{i,\tau_{i,t}^{m-1} +1} - x_i^*,\nabla_i C_i(x_{\tau_{i,t}^m}) \right\rangle \nonumber \\
    &\geq -(m-1) \eta U^2 - DL_{ii} \eta (m-1)U - D\eta B U \sum_{j\neq i} L_{ij} \nonumber \\
    &\quad + \mu_i \left\| x_{i,t} - x_i^*\right\|^2 - \sum_{j\neq i} \left\| x_{i,t}- x_i^* \right\|  L_{ij} \left\| x_{j,t} - x_j^*\right\| .
\end{align}
Substituting \eqref{eq:FO:t10} into \eqref{eq:FO:t5} yields
\begin{align}\label{eq:FO:t11}
    &\left\|x_{i,t+B} - x_i^*\right\|^2 \nonumber \\
    &\leq \left\| x_{i,t} - x_i^* \right\|^2 - 2 \eta M_{i,t} \mu_i \left\| x_{i,t} - x_i^* \right\|^2 \nonumber \\
    &\quad + \eta^2 M_{i,t} U^2 +\eta^2 M_{i,t}(M_{i,t}-1) \big( U^2+  L_{ii}DU\big) \nonumber \\
    &\quad + 2 \eta^2 M_{i,t} DBU \sum_{j\neq i} L_{ij}   \nonumber \\
    &\quad + 2 \eta M_{i,t} \sum_{j\neq i} \left\| x_{i,t}- x_i^* \right\|  L_{ij} \left\| x_{j,t} - x_j^*\right\| \nonumber \\
    & =  (1-2\eta M_{i,t} \mu_i) \left\| x_{i,t} - x_i^* \right\|^2 + 2 \eta^2 M_{i,t} DBU \sum_{j\neq i} L_{ij} \nonumber \\
    &\quad  +\eta^2 M_{i,t} U^2+\eta^2 M_{i,t}(M_{i,t}-1) \big( U^2+  L_{ii}DU\big)    \nonumber \\
    &\quad + 2 \eta M_{i,t} \sum_{j\neq i} L_{ij} \left\| x_{i,t}- x_i^* \right\|   \left\| x_{j,t} - x_j^*\right\|.
\end{align}

Recalling the Lyapunov function $V_t = \max_i \frac{\left\| x_{i,t} - x_i^* \right\|^2}{r_i^2}$, it follows that
\begin{align}\label{eq:FO:t12}
    &V_{t+B} = \max_i  \frac{\left\| x_{i,t+B} - x_i^* \right\|^2}{r_i^2} \nonumber \\
    &\leq \max_i \Big\{ (1-2\eta M_{i,t} \mu_i) \frac{\left\| x_{i,t} - x_i^* \right\|^2}{r_i^2} \nonumber \\
    &\quad + 2 \frac{\eta^2}{r_i^2} M_{i,t} DBU \sum_{j\neq i} L_{ij} \nonumber \\
    &\quad + \frac{\eta^2 M_{i,t} U^2}{r_i^2} +\frac{\eta^2}{r_i^2} M_{i,t}(M_{i,t}-1) \big( U^2+  L_{ii}DU\big)    \nonumber \\
    &\quad + 2 \eta M_{i,t} \sum_{j\neq i} L_{ij} \frac{\left\| x_{i,t}- x_i^* \right\|}{r_i}   \frac{\left\| x_{j,t} - x_j^*\right\|}{r_j} \frac{r_j}{r_i} \Big\} \nonumber \\
    &\leq \max_i \Big\{  (1-2\eta M_{i,t} \mu_i) V_t + 2\eta M_{i,t} V_t \sum_{j \neq i} L_{ij}\frac{r_j}{r_i} \nonumber \\
    &\quad + \frac{\eta^2 M_{i,t} U^2}{r_i^2} +\frac{\eta^2}{r_i^2} M_{i,t}(M_{i,t}-1) \big( U^2+  L_{ii}DU\big) \nonumber \\
    &\quad + 2 \frac{\eta^2}{r_i^2} M_{i,t} DBU \sum_{j\neq i} L_{ij} \Big\}  \nonumber \\
    &\leq \max_i \Big\{ \big( 1- 2\eta M_{i,t} \varepsilon ) V_t+ 2 \frac{\eta^2}{r_i^2} M_{i,t} DBU \sum_{j\neq i} L_{ij}  \Big\} \nonumber \\
    &\quad + \frac{\eta^2 M_{i,t} U^2}{r_i^2} +\frac{\eta^2}{r_i^2} M_{i,t}(M_{i,t}-1) \big( U^2+  L_{ii}DU\big)  \nonumber \\
    &\leq \max_i \Big\{ \big( 1- 2\eta \varepsilon ) V_t + \frac{\eta^2}{r_i^2} B U^2 \nonumber \\
    &\quad + \frac{\eta^2}{r_i^2} B^2 \big( U^2+  L_{ii}DU + 2 DU \sum_{j\neq i} L_{ij}\big) \Big\} \nonumber \\
    & \leq (1-2\eta \varepsilon)V_t + \frac{\eta^2}{r_{\min}^2}BU^2 + \eta^2 B^2 c_0,
\end{align}
where the first and second inequalities hold since $\frac{\left\| x_{i,t} - x_i^* \right\|}{r_i} \leq \sqrt{V_t}$, for all $i$. The third inequality follows from the definition $\varepsilon= \min_i \mu_i - \frac{1}{r_i} \sum_{j\neq i}r_j L_{ij}$. The fourth inequality follows since $1\leq M_{i,t}\leq B$ and $\epsilon>0$ due to Assumption 2.
The last inequality follows from the definitions $r_{\min} = \min_i r_i$ and $c_0 = \max_i \frac{U^2+  L_{ii}DU + 2 DU \sum_{j\neq i} L_{ij}}{r_i^2}$.

Without loss of generality, we assume $\frac{T}{B}=H$. Iteratively using \eqref{eq:FO:t12} yields
\begin{align}\label{eq:FO:t13:2}
    V_T &\leq (1-2 \eta \varepsilon)^H V_0 +  \sum_{k=0}^{H-1} (1-2\eta \varepsilon)^k  \eta^2 \big(\frac{BU^2}{r_{\min}^2} +  B^2 c_0  \big)  \nonumber \\
    &\leq (1-2 \eta \varepsilon)^H V_0  + \frac{\eta}{2 \varepsilon}\big(\frac{BU^2}{r_{\min}^2} +  B^2 c_0  \big)  \nonumber \\
    & = (1 - \frac{2\ln H }{H})^H V_0 + \frac{\ln H }{2 \varepsilon^2 H }\big(\frac{BU^2}{r_{\min}^2} +  B^2 c_0  \big) \nonumber \\
    &\leq e^{-2 \ln H} V_0 + \frac{\ln H }{2 \varepsilon^2 H }\big(\frac{BU^2}{r_{\min}^2} +  B^2 c_0  \big) \nonumber \\
    & = \frac{V_0}{H^2} + \frac{\ln H }{2 \varepsilon^2 H }\big(\frac{BU^2}{r_{\min}^2} +  B^2 c_0  \big) \nonumber \\
    & = \mathcal{O}(\frac{ B^2 \ln H}{H}),
\end{align}
where the first equality follows from the definition of $\eta$.
The last inequality follows since $(1-a)^T \leq e^{-a T}$ for any $a \in (0,1)$ and $T> 0$.
Substituting $H=T/B$ into \eqref{eq:FO:t13:2} completes the proof.


\subsection{Proof of Theorem 2}
\begin{proof}
Given that $\nabla_i C_i(x)$ is $L_{ij}$-Lipschitz in $x_j$, we have $\nabla_i C_i(x)$ is Lipschitz continuous in $x$ with the Lipschitz parameter $L_i:= \sum_{j=1}^N L_{ij}$. The statement holds since for any $x,y$, we have $\left\| \nabla_i C_i(x) - \nabla_i C_i(y)\right\| \leq \sum_{j} L_{ij} \left\| x_j - y_j\right\| \leq \sum_{j=1}^n L_{ij} \left\| x - y\right\|$. 


Define $x_{\delta}^{*} := (1-\frac{\delta}{R}) x^{*}$ and $x_{\delta_i}^{*} = (1-\frac{\delta}{R}) x_i^{*}$.
From the update rule \eqref{eq:ZO:t1}, we have
\begin{align}\label{eq:ZO:t2}
    &\left\|x_{i,\tau_{i,t}^m+1} - x_{\delta_i}^*\right\|^2 = \left\| \mathcal{P}_{\mathcal{X}_i^{\delta}}(x_{i,\tau_{i,t}^m}-\eta g_{i,\tau_{i,t}^m} ) - x_{\delta_i}^* \right\|^2 \nonumber \\
    &\leq \left\| x_{i,\tau_{i,t}^m}-\eta g_{i,\tau_{i,t}^m}  - x_{\delta_i}^* \right\|^2 \nonumber \\
    & = \left\| x_{i,\tau_{i,t}^{m-1}+1} - x_{\delta_i}^{*}\right\|^2  + \eta^2 \left\|g_{i,\tau_{i,t}^m} \right\|^2 \nonumber \\
    &\quad - 2 \eta \langle x_{i,\tau_{i,t}^{m-1}+1} - x_{\delta_i}^{*},g_{i,\tau_{i,t}^m}\rangle,
\end{align}
where the second equality follows since $x_{\delta_i}^{*} \in \mathcal{X}_{i}^{\delta}$. Note that $x_{i,\tau_{i,t}^{M_{i,t}}+1} = x_{i,t+B}$ and $x_{i,\tau_{i,t}^{0}+1} = x_{i,t}$.
Summing up \eqref{eq:ZO:t2} over $m=1,\ldots,M_{i,t}$ and rearranging, we have
\begin{align}\label{eq:ZO:t3}
    &\left\|x_{i,t+B} - x_{\delta_i}^*\right\|^2 = \left\|x_{i,\tau_{i,t}^{M_{i,t}}+1} - x_{\delta_i}^*\right\|^2 \nonumber \\
    &\leq \left\| x_{i,\tau_{i,t}^{0}+1} - x_{\delta_i}^* \right\|^2 + \eta^2 \sum_{m=1}^{M_{i,t}} \left\|g_{i,\tau_{i,t}^m} \right\|^2 \nonumber \\
    &\quad -2\eta  \sum_{m=1}^{M_{i,t}} \left\langle x_{i,\tau_{i,t}^{m-1} +1} - x_{\delta_i}^*,g_{i,\tau_{i,t}^m} \right\rangle \nonumber \\
    & = \left\| x_{i,t} - x_{\delta_i}^* \right\|^2 + \eta^2 \sum_{m=1}^{M_{i,t}} \left\|g_{i,\tau_{i,t}^m} \right\|^2  \nonumber \\
    &\quad -2\eta  \sum_{m=1}^{M_{i,t}} \left\langle  x_{i,\tau_{i,t}^{m} } - x_{\delta_i}^*,g_{i,\tau_{i,t}^m} \right\rangle \nonumber \\
    &\leq \left\| x_{i,t} - x_{\delta_i}^* \right\|^2 + \eta^2 M_{i,t} \frac{d^2}{\delta^2} U_c^2 \nonumber \\
    &\quad -2\eta  \sum_{m=1}^{M_{i,t}} \left\langle  x_{i,\tau_{i,t}^{m} } - x_{\delta_i}^*,g_{i,\tau_{i,t}^m} \right\rangle,
\end{align}
where the last inequality follows from Assumption~\ref{assump:C:bound}. 
Recalling that $g_{i,\tau_{i,t}^m}=\frac{d}{\delta} C_i(\hat{x}_{\tau_{i,t}^m}) u_{i,\tau_{i,t}^m}$, we have
\begin{align}\label{eq:ZO:t3:2}
    &\EE_{ u_{i,\tau_{i,t}^m} \sim \SSS} [g_{i,\tau_{i,t}^m} ] \nonumber \\
    &=\frac{d}{\delta} \int_{\SSS} C_i(x_{i,\tau_{i,t}^m }+\delta u_{i,\tau_{i,t}^m }, \hat{x}_{-i,\tau_{i,t}^m}) u_{i,\tau_{i,t}^m } d u_{i,\tau_{i,t}^m } \nonumber\\
    &= \frac{d}{\delta \textrm{vol}_{d-1}(\delta \SSS)} \int_{\delta \SSS} C_i(x_{i,\tau_{i,t}^m }+ u_i', \hat{x}_{-i,\tau_{i,t}^m}) \frac{u_i'}{\left\| u_i'\right\|} d u_i' \nonumber\\
    & = \frac{d}{\delta \textrm{vol}_{d-1}(\delta \SSS)} \nabla_i \int_{\delta \BB} C_i(x_{i,\tau_{i,t}^m }+ w_i, \hat{x}_{-i,\tau_{i,t}^m}) d w_i \nonumber\\
    & = \frac{d}{\delta} \frac{ \textrm{vol}_d(\delta \BB)}{ \textrm{vol}_{d-1}(\delta \SSS)} \nabla_i \int_{ \BB} C_i(x_{i,\tau_{i,t}^m }+ \delta v_i, \hat{x}_{-i,\tau_{i,t}^m}) d v_i \nonumber\\
    & = \EE_{v_i \in \BB} \big[ \nabla_i C_i(x_{i,\tau_{i,t}^m} +\delta v_i, \hat{x}_{-i,\tau_{i,t}^m})  \big],
\end{align}
where the third equality follows from Stoke's theorem, i.e., $\nabla \int_{\delta \BB} f(x + v)  dv = \int_{\delta \SSS} f(x + u) \frac{u}{\|u\|} \, du$, and the last equality follows from the fact that the ratio of volume to surface area of a $d$-dimensional ball of radius $\delta$ is $\delta/d$.

With \eqref{eq:ZO:t3:2}, taking expectation on both sides of \eqref{eq:ZO:t3} with respect to $u_{i,\tau_{i,t}^m}$ yields 
\begin{align}\label{eq:ZO:t4}
    &\EE \left\|x_{i,t+B} - x_{\delta_i}^*\right\|^2 \leq \EE \left\| x_{i,t}  - x_{\delta_i}^* \right\|^2 + \eta^2 M_{i,t} \frac{d^2}{\delta^2} U_c^2 \nonumber \\
    & -2\eta  \sum_{m=1}^{M_{i,t}} \left\langle  x_{i,\tau_{i,t}^{m} } - x_{\delta_i}^*, \EE_{ v_i \in \BB } \big[\nabla_i C_i(x_{i,\tau_{i,t}^m} +\delta v_i, \hat{x}_{-i,\tau_{i,t}^m}) \big]\right\rangle.
\end{align}

For the last term, we have
\begin{align}\label{eq:ZO:t5}
    &\left\langle  x_{i,\tau_{i,t}^{m} } - x_{\delta_i}^*, \EE_{ v_i \in \BB } \big[\nabla_i C_i(x_{i,\tau_{i,t}^m} +\delta v_i, \hat{x}_{-i,\tau_{i,t}^m}) \big]\right\rangle \nonumber \\
    & = \Big\langle  x_{i,\tau_{i,t}^{m} } - x_{\delta_i}^*, \EE_{ v_i \in \BB } \big[\nabla_i C_i(x_{i,\tau_{i,t}^m} +\delta v_i, \hat{x}_{-i,\tau_{i,t}^m}) \big]  \nonumber \\
    &\quad - \nabla_i C_i(x_{\tau_{i,t}^m}) +\nabla_i C_i(x_{\tau_{i,t}^m}) \Big\rangle \nonumber \\
    &\geq  \left\langle  x_{i,\tau_{i,t}^{m} } - x_{\delta_i}^*,\nabla_i C_i(x_{\tau_{i,t}^m})\right\rangle - D L_{i} \delta \sqrt{N} \nonumber \\
    &= \left\langle x_{i,t} - x_{\delta_i}^* + x_{i,\tau_{i,t}^{m} } - x_{i,t}, \nabla_i C_i(x_{\tau_{i,t}^m})\right\rangle - D L_i \delta \sqrt{N} \nonumber \\
    &\geq \left\langle x_{i,t} - x_{\delta_i}^*, \nabla_i C_i(x_{\tau_{i,t}^m})\right\rangle -  (m-1) \eta \frac{d}{\delta} U U_c - D L_i \delta \sqrt{N},
\end{align}
where the first inequality follows since $\nabla_i C_i$ is Lipschitz continuous.
The last inequality follows since 
\begin{align*}
    &\left\|x_{i,\tau_{i,t}^{m} } - x_{i,t} \right\| =  \left\| x_{i,\tau_{i,t}^{m} } - x_{i,\tau_{i,t}^{1} } \right\| \nonumber \\
    &= \left\|\sum_{k=1}^{m-1} \left( x_{i,\tau_{i,t}^{k+1} } - x_{i,\tau_{i,t}^{k} } \right) \right\| \leq \sum_{k=1}^{m-1}\left\| x_{i,\tau_{i,t}^{k+1} } - x_{i,\tau_{i,t}^{k} } \right\| \nonumber\\
    & = \sum_{k=1}^{m-1}\left\| x_{i,\tau_{i,t}^{k}+1 } - x_{i,\tau_{i,t}^{k} } \right\| \leq \sum_{k=1}^{m-1}\eta \left\| g_{i,\tau_{i,t}^k} \right\|
     \nonumber\\
    &\leq \sum_{k=1}^{m-1}  \eta \frac{d}{\delta} U_c \leq (m-1) \eta \frac{d}{\delta} U_c.
\end{align*}
Furthermore, we have
\begin{align}\label{eq:ZO:t6}
    &\left\langle x_{i,t} - x_{\delta_i}^*, \nabla_i C_i(x_{\tau_{i,t}^m})\right\rangle \nonumber \\
    & = \left\langle x_{i,t} - x_{\delta_i}^*, \nabla_i C_i(x_{i,t}, x_{-i,\tau_{i,t}^m}) \right\rangle \nonumber \\
    &\quad + \left\langle x_{i,t} - x_{\delta_i}^*,\nabla_i C_i(x_{\tau_{i,t}^m}) - \nabla_i C_i(x_{i,t}, x_{-i,\tau_{i,t}^m}) \right\rangle \nonumber \\
    &\geq \left\langle x_{i,t} - x_{\delta_i}^*, \nabla_i C_i(x_{i,t}, x_{-i,\tau_{i,t}^m}) \right\rangle - L_{ii} D\left\|x_{i,\tau_{i,t}^{m} } - x_{i,t} \right\| \nonumber \\
    &\geq \left\langle x_{i,t} - x_{\delta_i}^*, \nabla_i C_i(x_{i,t}, x_{-i,\tau_{i,t}^m}) \right\rangle - L_{ii} D (m-1) \eta \frac{d}{\delta} U_c \nonumber \\
    & = \left\langle x_{i,t} - x_{\delta_i}^*, \nabla_i C_i(x_t) +\nabla_i C_i(x_{i,t}, x_{-i,\tau_{i,t}^m}) - \nabla_i C_i(x_t)\right\rangle \nonumber \\
    &\quad - L_{ii} D (m-1) \eta \frac{d}{\delta} U_c \nonumber \\
    &\geq \left\langle x_{i,t} - x_{\delta_i}^*, \nabla_i C_i(x_t)\right\rangle - D \sum_{j\neq i} L_{ij} \left\| x_{j, \tau_{i,t}^m} - x_{j,t}\right\| \nonumber \\
    &\quad - L_{ii} D (m-1) \eta \frac{d}{\delta} U_c \nonumber \\
    &\geq \left\langle x_{i,t} - x_{\delta_i}^*, \nabla_i C_i(x_t)\right\rangle - D \sum_{j\neq i} L_{ij} B\eta \frac{d}{\delta} U_c \nonumber \\
    &\quad - L_{ii} D (m-1) \eta \frac{d}{\delta} U_c,
\end{align}
where the last inequality follows since $\left\| x_{j, \tau_{i,t}^m} - x_{j,t}\right\| \leq \sum_{k=t}^{\tau_{i,t}^m -1} \left\| x_{j,k} - x_{j,k+1}\right\| \leq B \eta \frac{d}{\delta}U_c$. Besides, we have
\begin{align}\label{eq:ZO:t7}
    &\left\langle x_{i,t} - x_{\delta_i}^*, \nabla_i C_i(x_t)\right\rangle \nonumber \\
    & = \Big\langle x_{i,t} - x_{\delta_i}^*, \nabla_i C_i(x_t) - \nabla_i C_i (x_{\delta_i}^*,x_{-i,t}) \nonumber \\
    &\quad + \nabla_i C_i (x_{\delta_i}^*,x_{-i,t}) - \nabla_i C_i(x_{\delta}^*) + \nabla_i C_i(x_{\delta}^*) - \nabla_i C_i(x^*) \Big\rangle  \nonumber \\
    &\quad +\left\langle x_{i,t} - x_{\delta_i}^*,\nabla_i C_i(x^*) \right\rangle \nonumber \\
    &\geq \mu_i \left\| x_{i,t} - x_{\delta_i}^*\right\|^2 - \left\| x_{i,t} - x_{\delta_i}^*\right\|  \sum_{j \neq i} L_{ij}\left\| x_{j,t} - x_{\delta_j}^*\right\|  \nonumber \\
    &\quad - D L_i \left\| x_{\delta}^* - x^* \right\| +\left\langle x_{i,t} - x_{\delta_i}^*,\nabla_i C_i(x^*) \right\rangle \nonumber \\
    &\geq \mu_i \left\| x_{i,t} - x_{\delta_i}^*\right\|^2 - \left\| x_{i,t} - x_{\delta_i}^*\right\|  \sum_{j \neq i} L_{ij}\left\| x_{j,t} - x_{\delta_j}^*\right\|  \nonumber \\
    &\quad - D L_i \left\| x_{\delta}^* - x^* \right\| +\left\langle x_{i,t} - x_{\delta_i}^* - (x_{i,t} - x_i^*),\nabla_i C_i(x^*) \right\rangle  \nonumber \\
    & \geq \mu_i \left\| x_{i,t} - x_{\delta_i}^*\right\|^2 - \left\| x_{i,t} - x_{\delta_i}^*\right\|  \sum_{j \neq i} L_{ij}\left\| x_{j,t} - x_{\delta_j}^*\right\|  \nonumber \\
    &\quad - \delta \frac{D^2 L_i}{R} - \delta \frac{D U}{R},
\end{align}
where the second inequality holds due to the first-order optimality condition $\langle x_i - x_i^{*}, \nabla_i \mathcal{C}_i(x^{*}) \rangle \geq 0, \;   {\rm{for \; all}}\;  x_i \in \mathcal{X}_i$. The last inequality follows since $\left\| x_{\delta}^* - x^* \right\| = \frac{\delta}{R}\left\| x^*\right\| \leq \frac{\delta D}{R}$.

Combining \eqref{eq:ZO:t5}, \eqref{eq:ZO:t6} and \eqref{eq:ZO:t7}, we have
\begin{align}\label{eq:ZO:t8}
    &\left\langle  x_{i,\tau_{i,t}^{m} } - x_{\delta_i}^*, \EE_{ v_i \in \BB } \big[\nabla_i C_i(x_{i,\tau_{i,t}^m} +\delta v_i, \hat{x}_{-i,\tau_{i,t}^m}) \big]\right\rangle \nonumber \\
    &\geq \mu_i \left\| x_{i,t} - x_{\delta_i}^*\right\|^2 - \left\| x_{i,t} - x_{\delta_i}^*\right\|  \sum_{j \neq i} L_{ij}\left\| x_{j,t} - x_{\delta_j}^*\right\|  \nonumber \\
    &\quad - \delta \frac{D^2 L_i}{R} - \delta \frac{D U}{R} - D \sum_{j\neq i} L_{ij} B\eta \frac{d}{\delta} U_c - D L_i \delta \sqrt{N} \nonumber \\
    &\quad - L_{ii} D (m-1) \eta \frac{d}{\delta} U_c -  (m-1) \eta \frac{d}{\delta} U U_c .
\end{align}
Substituting \eqref{eq:ZO:t8} into \eqref{eq:ZO:t4}, it yields

\begin{align}\label{eq:ZO:t9}
    &\EE \left\|x_{i,t+B} - x_{\delta_i}^*\right\|^2 \nonumber \\
    &\leq (1 - 2 \eta \mu_i M_{i,t})\EE \left\| x_{i,t}  - x_{\delta_i}^* \right\|^2 + \eta^2 M_{i,t} \frac{d^2}{\delta^2} U_c^2 \nonumber \\
    &\quad + 2 \eta M_{i,t} \left\| x_{i,t} - x_{\delta_i}^*\right\|  \sum_{j \neq i} L_{ij}\left\| x_{j,t} - x_{\delta_j}^*\right\| \nonumber \\
    &\quad + 2\eta \delta M_{i,t} \Big(\frac{D^2 L_i}{R} +\frac{D U}{R}+ D L_i \sqrt{N}\Big) \nonumber \\
    &\quad + \frac{\eta^2 M_{i,t}}{\delta}  \big(2 D \sum_{j\neq i} L_{ij} Bd U_c + (M_{i,t}-1) d U_c (L_{ii} D + U)\big).
\end{align}
Define the Lyapunov functions $V_t^{\delta} = \max_i \frac{\EE \left\|x_{i,t} - x_{\delta_i}^*\right\|^2}{r_i^2}$ and $V_t = \max_i \frac{\EE \left\|x_{i,t} - x_{i}^*\right\|^2}{r_i^2}$. Then, it follows that
\begin{align}\label{eq:ZO:t10}
    &V_{t+B}^{\delta} = \max_i \frac{\EE \left\|x_{i,t+B} - x_{\delta_i}^*\right\|^2}{r_i^2} \nonumber \\
    &\leq \max_i \Big\{  (1 - 2 \eta \mu_i M_{i,t}) \frac{\EE \left\| x_{i,t}  - x_{\delta_i}^* \right\|^2}{r_i^2} + \frac{\eta^2}{\delta^2 r_i^2} M_{i,t} d^2 U_c^2 \nonumber \\
    &\quad + 2 \eta M_{i,t} \frac{\left\| x_{i,t} - x_{\delta_i}^*\right\|}{r_i}  \sum_{j \neq i} L_{ij} \frac{\left\| x_{j,t} - x_{\delta_j}^*\right\|}{r_j} \frac{r_j}{r_i} \nonumber \\
    &\quad + \frac{2\eta \delta M_{i,t}}{r_i^2} \Big(\frac{D^2 L_i}{R} +\frac{D U}{R}+ D L_i \sqrt{N}\Big) \nonumber \\
    &\quad + \frac{\eta^2 M_{i,t} U_c}{\delta r_i^2}  \Big(2 D \sum_{j\neq i} L_{ij} Bd  + (M_{i,t}-1) d (L_{ii} D + U)\Big) \Big\}\nonumber \\
    &\leq \max_i \Big\{ (1 - 2 \eta \mu_i M_{i,t}) V_t^{\delta} + \frac{\eta^2}{\delta^2 } M_{i,t} c_1 \nonumber \\
    &\quad + 2 \eta M_{i,t} V_t^{\delta} \sum_{j \neq i} L_{ij}\frac{r_j}{r_i} + \frac{\eta^2 B^2}{\delta} c_2 + \eta \delta M_{i,t} c_3 \Big\}\nonumber \\
    &\leq \max_i \Big\{ (1 - 2 \eta \varepsilon M_{i,t}) V_t^{\delta}  +\frac{\eta^2 B}{\delta^2} c_1 +\frac{\eta^2 B^2}{\delta} c_2 + \eta \delta B c_3 \Big\}\nonumber \\
    &\leq (1-2\eta \varepsilon) V_t^{\delta} +\frac{\eta^2 B}{\delta^2} c_1 +\frac{\eta^2 B^2}{\delta} c_2 + \eta \delta B c_3,
\end{align}
where we define $c_1 = \max_i \frac{d^2 U_c^2}{r_i^2}$, $c_2 = \max_i \frac{1}{r_i^2}(2 D \sum_{j\neq i} L_{ij} d U_c + d U_c (L_{ii} D + U)) $, and $c_3 = \max_i \frac{2D}{r_i^2}(\frac{D L_i}{R} +\frac{ U}{R}+  L_i \sqrt{N})$. The third inequality follows since $\mu_i -\sum_{j \neq i} L_{ij}\frac{r_j}{r_i} \geq \varepsilon$ and the last inequality follows since $1\leq M_{i,t}\leq B$.

Similarly, we assume $\frac{T}{B}=H$. Iteratively using \eqref{eq:ZO:t10}, we have
\begin{align}
    &V_T^{\delta} \nonumber \\
    &\leq (1- 2 \eta \varepsilon)^H V_0^{\delta}  \nonumber \\
    &\quad + \sum_{k=0}^{H-1} (1-2\eta \varepsilon)^k \big(\frac{\eta^2 B}{\delta^2} c_1 +\frac{\eta^2 B^2}{\delta} c_2 + \eta \delta B c_3 \big) \nonumber \\
    &\leq (1- 2 \eta \varepsilon)^H V_0^{\delta} + \frac{1}{2 \varepsilon} \big(\frac{\eta B}{\delta^2} c_1 +\frac{\eta B^2}{\delta} c_2 + \delta B c_3 \big) \nonumber \\
    & = (1 - \frac{2 \ln H} {H})^H V_0^{\delta} + \frac{\ln H}{H \delta^2} \frac{  B c_1}{2  \varepsilon^2} + \frac{\ln H} {H \delta} \frac{B^2 c_2}{ 2 \varepsilon} +  \frac{\delta B c_3}{2 \varepsilon}
    \nonumber \\
    &\leq \frac{V_0^{\delta}}{H^2} + \frac{\ln H}{H \delta^2} \frac{  B c_1}{2  \varepsilon^2} + \frac{\ln H} {H \delta} \frac{B^2 c_2}{ 2 \varepsilon} +  \frac{\delta B c_3}{2 \varepsilon},
\end{align}
where the last inequality holds since $(1-\frac{2 \ln H}{H})^H \leq e^{-2 \ln H} = H^{-2}$.
Recalling the definition of $V_t$, we have
\begin{align}\label{eq:ZO:final}
    & V_T =  \max_i \frac{\EE \left\|x_{i,T} - x_{i}^*\right\|^2}{r_i^2} \nonumber \\
    & = \max_i \frac{\EE \left\|x_{i,T} - x_{\delta_i}^* + x_{\delta_i}^* - x_{i}^*\right\|^2}{r_i^2} \nonumber \\
    & \leq \max_i \Big\{ \frac{\EE \left\|x_{i,T} - x_{\delta_i}^*\right\|^2 +  2 \EE \left\|x_{i,T} - x_{\delta_i}^*\right\|\left\|x_{\delta_i}^* - x_{i}^*\right\| }{r_i^2} \nonumber \\
    &\quad + \frac{\left\|x_{\delta_i}^* - x_{i}^*\right\|^2}{r_i^2 } \Big\}\nonumber \\
    &\leq \max_i \Big\{ \frac{\EE \left\|x_{i,t} - x_{\delta_i}^*\right\|^2 }{r_i^2} + \frac{2D \delta}{ R r_i^2} + \frac{\delta^2}{R^2 r_i^2}  \Big\}\nonumber \\
    &\leq V_T^{\delta} + \frac{2D \delta}{ R r_{\min}^2} + \frac{\delta^2}{R^2 r_{\min}^2} \nonumber \\
    &\leq \frac{V_0^{\delta}}{H^2} + \frac{B c_1 \ln H}{2  \varepsilon^2 H \delta^2 } + \frac{B^2 c_2 \ln H} {2 \varepsilon H \delta} +  \frac{\delta B c_3}{2 \varepsilon} + \frac{2D \delta}{ R r_{\min}^2} \nonumber \\
    &\quad + \frac{\delta^2}{R^2 r_{\min}^2}.
\end{align}
Substituting $\delta = \frac{B}{T^{1/3}}$ into \eqref{eq:ZO:final} yields the desired result.

\end{proof}





%



\bibliography{00_citation}
\bibliographystyle{unsrt}







\end{document}